\newtheorem{theorem}{Theorem}
\newtheorem{lemma}{Lemma}
\theoremstyle{definition}
\newtheorem{remark}{Remark}
\begin{document}
\title[Strict singularity of weighted composition operators]%
{Strict singularity of weighted composition operators on derivative Hardy spaces}

\author{Qingze Lin, Junming Liu*, Yutian Wu}
\thanks{*Corresponding author}

\address{School of Applied Mathematics, Guangdong University of Technology, Guangzhou, Guangdong, 510520, P.~R.~China}\email{gdlqz@e.gzhu.edu.cn}

\address{School of Applied Mathematics, Guangdong University of Technology, Guangzhou, Guangdong, 510520, P.~R.~China}\email{jmliu@gdut.edu.cn}

\address{School of Financial Mathematics \& Statistics, Guangdong University of Finance, Guangzhou, Guangdong, 510521, P.~R.~China}\email{26-080@gduf.edu.cn}

\begin{abstract}
We prove that the weighted composition operator $W_{\phi,\varphi}$ fixes an isomorphic copy of $\ell^p$ if the operator $W_{\phi,\varphi}$ is not compact on the derivative Hardy space $S^p$. In particular, this implies that the strict singularity of the operator $W_{\phi,\varphi}$ coincides with the compactness of it on $S^p$. Moreover, when $p\neq2$, we characterize the conditions for those weighted composition operators $W_{\phi,\varphi}$ on $S^p$ which fix an isomorphic copy of $\ell^2$ .
\end{abstract}
\keywords{Volterra type operator, compactness, strict singularity, Hardy space} \subjclass[2010]{47B33, 30H05}
\thanks{This work was supported by NNSF of China (Grant No. 11801094).}

\maketitle

\section{\bf Introduction}
Let $\mathbb{D}$ denote the open unit disk in the complex plane $\mathbb{C}$,  and $H(\mathbb D)$ the space of all analytic functions in $\mathbb{D}$. For $0<p<\infty$, the Hardy space $H^{p}$ is the space of functions $f\in H(\mathbb D)$ for which
$$ \|f\|_{H^{p}}:=\left(\sup_{0\leq r<1}\int_{\partial\mathbb{D}}|f(r\xi)|^{p}dm(\xi)\right)^{1/p}<\infty\,,$$
where $m$ is the normalized Lebesgue measure on $\partial\mathbb{D}$. From \cite[Theorem~9.4]{zhu}, this norm is equal to the following norm:
$$\|f\|_{H^{p}}=\left(\int_{\partial\mathbb{D}}|f(\xi)|^{p}dm(\xi)\right)^{1/p}\,,$$
where for any $\xi\in\partial\mathbb{D}$, $f(\xi)$ is the radial limit which exists almost every.

For $p=\infty$, the space $H^{\infty}$ is defined by
$$H^{\infty}=\{f\in H(\mathbb D):\ \|f\|_{\infty}:=\ \sup_{z\in \mathbb D}\{|f(z)|\}<\infty\}\,.$$

We define the weighted composition operator $W_{\phi,\varphi}$ for $f\in H(\mathbb D)$ by
$$W_{\phi,\varphi}(f)(z)=\phi(z)f\circ\varphi(z)\,,\qquad z\in \mathbb D\,,$$
where $\phi\in H(\mathbb D)$ and $\varphi$ is an analytic self-map of $\mathbb D$.
If $\phi(z)\equiv 1$, $W_{\phi,\varphi}$ becomes the composition operator $C_\varphi$ while
if $\varphi(z)\equiv z$, $W_{\phi,\varphi}$ becomes the multiplication operator $M_\phi$.
For weighted composition operators $W_{\phi,\varphi}$ on Hardy spaces $H^p$, we refer the readers to the literatures \cite{CH1,CM,ZCRZ,RD}.

We define the derivative Hardy space $S^p$ by
$$S^p=\{f\in H(\mathbb{D}): \|f\|_{S^{p}}:=|f(0)|+\|f'\|_{H^{p}}<\infty\}.$$
For $1\leq p\leq\infty$, $S^p$ is a Banach algebra and there is an inclusion relation: $S^p\subset H^\infty$ (for the detail structures of $S^p$ spaces, see \cite{ZCBP,ZCBP1,GL,LIN,LIN1} for references).

In paper~\cite{RR}, Roan started the investigation of composition operators on the space $S^p$. After his work, MacCluer \cite{BDM} gave the characterizations of the boundedness and the compactness of the composition operators on the space $S^p$ in terms of Carleson measures. A remarkable result on the boundedness and the compactness of the weighted composition operators on $S^p$ was obtained in \cite{CH}, in which they are both characterized through the corresponding weighted composition operators on $H^p$. Furthermore, the isometry between $S^p$ was obtained by Novinger and Oberlin in \cite{NO}, in which they showed that the isometries were closely related to the weighted composition operator.

A bounded operator $T\colon X \to Y$ between Banach spaces is strictly singular if its restriction to any infinite-dimensional closed subspace is not an isomorphism onto its image. This notion was introduced by Kato \cite{Kato}.

A bounded operator $T\colon X \to Y$ between Banach spaces is said to fix a copy of the given Banach space $E$ if there is a closed subspace $M\subset X$, linearly isomorphic to $E$, such that the restriction $T_{|M}$ defines an isomorphism from $M$ onto $T(M)$. The bounded operator $T\colon X \to Y$ is called $\ell^p$-singular if it does not fix any copy of $\ell^p$\,.

Laitila, et al \cite{LNST} recently investigated the strict singularity for the composition operators on $H^p$ spaces. Following their ideas, Miihkinen \cite{SM} studied the strict singularity of $T_g$ on Hardy space $H^p$ and showed that the strict singularity of $T_g$ coincides with its compactness on $H^p, \, 1 \leq p < \infty.$ Miihkinen \cite{SM} also post an open question which was resolved in \cite{MNST} by utilizing the generalized Volterra operators. It should be noticed that Hern\'{a}ndez, et al \cite{HST} investigated the interpolation and extrapolation of strictly singular operators between $L_p$ spaces.

In this paper, we prove that the weighted composition operator $W_{\phi,\varphi}$ fixes an isomorphic copy of $\ell^p$ if the operator $W_{\phi,\varphi}$ is not compact on the derivative Hardy space $S^p$. In particular, this implies that the strict singularity of the operator $W_{\phi,\varphi}$ coincides with the compactness of it on $S^p$. Moreover, when $p\neq2$, we characterize the conditions for those weighted composition operators $W_{\phi,\varphi}$ on $S^p$ which fix an isomorphic copy of $\ell^2$ .

Our main results read as follows:

\begin{theorem}
\label{th1}
Let $1 \leq p <\infty$, $\phi\in H(\mathbb D)$ and $\varphi$ is an analytic self-map of $\mathbb D$. If the weighted composition operator $W_{\phi,\varphi}\colon S^p \to S^p$ is bounded but not compact, then $W_{\phi,\varphi}$ fixes an isomorphic copy of $\ell^p$ in $S^p$. In particular, the operator $W_{\phi,\varphi}$ is not strictly singular, that is, strict singularity of bounded operator $W_{\phi,\varphi}$ coincides with its compactness.
\end{theorem}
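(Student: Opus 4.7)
The plan is to reduce the assertion about $S^p$ to the corresponding $\ell^p$-fixing results for (weighted) composition operators and generalized Volterra operators on $H^p$ established in \cite{LNST,SM,MNST}. The starting point is the product-rule identity
\[
(W_{\phi,\varphi}f)'(z)=\phi'(z)\,(f\circ\varphi)(z)+(\phi\varphi')(z)\,(f'\circ\varphi)(z),\qquad z\in\mathbb{D},
\]
which, under the natural identification $S^p\simeq\mathbb{C}\oplus H^p$ via $f\mapsto(f(0),f')$, decomposes the derivative of $W_{\phi,\varphi}f$ into a generalized Volterra-type contribution (involving $\phi'$ and $f\circ\varphi$) and a weighted composition piece $W_{\phi\varphi',\varphi}$ acting on $f'$. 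The boundedness/compactness transfer between $S^p$ and $H^p$ from \cite{CH} makes this decomposition quantitatively usable: since $\|f\|_{S^p}=|f(0)|+\|f'\|_{H^p}$, $\ell^p$-structures in $S^p$ are essentially controlled by $\ell^p$-structures of the derivatives in $H^p$.

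First I would invoke the standard test-function criterion: since $W_{\phi,\varphi}\colon S^p\to S^p$ is non-compact, there exists a bounded sequence $(f_n)\subset S^p$ with $f_n\to 0$ uniformly on compact subsets of $\mathbb{D}$ but $\|W_{\phi,\varphi}f_n\|_{S^p}\geq\delta>0$. After passing to a subsequence, either (a) $\|\phi'\cdot(f_n\circ\varphi)\|_{H^p}\not\to 0$, or (b) $\|(\phi\varphi')\cdot(f_n'\circ\varphi)\|_{H^p}\not\to 0$. In case (b) the non-compactness of $W_{\phi\varphi',\varphi}$ on $H^p$, combined with the $\ell^p$-fixing theorem of \cite{LNST}, produces an $\ell^p$-basic sequence $(g_n)\subset H^p$ whose $W_{\phi\varphi',\varphi}$-images remain $\ell^p$-equivalent; setting $\tilde{f}_n(z):=\int_0^z g_n(w)\,dw$ lifts this to an $\ell^p$-basic sequence in $S^p$ whose $W_{\phi,\varphi}$-images are $\ell^p$-equivalent in the $S^p$-norm. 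Case (a) is handled analogously using the generalized Volterra $\ell^p$-fixing result of \cite{MNST}, applied to the operator $g\mapsto\phi'\cdot(F_g\circ\varphi)$ with $F_g(z)=\int_0^z g(w)\,dw$.

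The hard part will be controlling the interaction between the two terms in the product-rule decomposition, because a priori they could partially cancel on a test sequence, so one cannot in general extract the $\ell^p$-witness from a single piece. To handle this I would perform a Bessaga--Pe{\l}czy\'nski-type gliding-hump selection, passing to a further subsequence so that the derivatives $(W_{\phi,\varphi}f_n)'$ become almost disjointly supported as boundary functions on $\partial\mathbb{D}$; this is feasible because, along any non-compactness sequence, $|\varphi(z_n)|\to 1$ and the radial masses of both $f_n\circ\varphi$ and $f_n'\circ\varphi$ can be concentrated near suitable boundary points after replacing the test sequence by sums of boundary-peaking kernels in the spirit of \cite{LNST,SM}. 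The $H^p$ disjoint-support inequality $\|\sum a_n h_n\|_{H^p}\asymp(\sum|a_n|^p\|h_n\|_{H^p}^p)^{1/p}$ then simultaneously furnishes the required two-sided $\ell^p$-estimates for both $(f_n)$ and $(W_{\phi,\varphi}f_n)$ measured in the $S^p$-norm, yielding the claimed isomorphic copy of $\ell^p$ in $S^p$ and, a fortiori, the failure of strict singularity.
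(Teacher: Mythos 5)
Your overall mechanism --- differentiate $W_{\phi,\varphi}f$ by the product rule, view $\phi\varphi'\cdot(f'\circ\varphi)$ as the weighted composition operator $W_{\phi\varphi',\varphi}$ acting on $f'\in H^p$, and lift back to $S^p$ through integration --- is the right one, and in your case (b) it essentially follows the route the paper itself uses for Theorem~\ref{th3}, where $W_{\phi,\varphi}$ on $S^p_0$ is conjugated to $W_{\phi',\varphi}\circ T_z+W_{\phi\varphi',\varphi}$ on $H^p$. But as written there are genuine gaps. First, the $\ell^p$-fixing theorem you invoke in case (b) is attributed to \cite{LNST}, which concerns unweighted composition operators $C_\varphi$ only; for $W_{\phi\varphi',\varphi}$ you need the weighted analogue (the preprint \cite{LMN}) or you must construct the $\ell^p$-system by hand --- and that construction (explicit test functions $f_a$, the localization of $|(W_{\phi,\varphi}f_{a_n})'|^p$ on the sets $E_\varepsilon$ as in Lemma~\ref{le3}, and a gliding-hump selection with two-sided estimates) is precisely the content of the paper's proof. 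Your closing paragraph defers exactly this quantitative step to ``the spirit of \cite{LNST,SM}'', so the hard part is asserted rather than proved; note also that before speaking of non-compactness of $W_{\phi\varphi',\varphi}$ on $H^p$ you must know it is bounded there, which has to be extracted from the boundedness of $W_{\phi,\varphi}$ on $S^p$ via \cite[Theorem~2.1]{CH}.

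Second, and more importantly, you never identify the key fact that makes the decomposition tractable: boundedness of $W_{\phi,\varphi}$ on $S^p$ forces $\phi\in S^p$, hence $\phi'\in H^p$, and then $W_{\phi',\varphi}\colon S^p\to H^p$ is \emph{compact} (\cite[Proposition~3.3(ii)]{CH}; Lemma~\ref{le2} of the paper). This shows your case (a) cannot occur at all --- along any bounded sequence tending to $0$ uniformly on compacta the term $\phi'\cdot(f_n\circ\varphi)$ is automatically norm-null --- and it also disposes of the ``cancellation'' worry: on a sufficiently fast subsequence the compact term contributes at most $\varepsilon\|(c_j)\|_{\ell^p}$ to any linear combination (H\"older), so no extra disjointification separating the two terms is needed. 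Without this fact your case (a) treatment does not stand: the operator $g\mapsto\phi'\cdot(F_g\circ\varphi)$ is not one of the generalized Volterra operators of \cite{MNST} (those act as $f\mapsto\int_0^z f(\varphi(t))g'(t)\,dt$ on $f$ itself, not on an antiderivative), and no $\ell^p$-copy could ever be extracted from that term since it is compact. With the two repairs --- replacing \cite{LNST} by \cite{LMN} (or by the paper's direct test-function construction) in case (b), and inserting the compactness of $W_{\phi',\varphi}\colon S^p\to H^p$ to eliminate case (a) and the interaction term --- your argument becomes a legitimate and somewhat shorter alternative to the paper's self-contained gliding-hump proof.
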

\begin{remark}
In the final section, we prove that the claims of theorem~\ref{th1} is still true for the case of $p=\infty$\,.
\end{remark}

Denote $E_\varphi = \{\zeta \in\partial\mathbb{D} : |\varphi(\zeta)|=1\}$, then we have
\begin{theorem}
\label{th2}
Let $1 \leq p <\infty$, $\phi\in H(\mathbb D)$ and $\varphi$ is an analytic self-map of $\mathbb D$.  Suppose that $W_{\phi,\varphi}\colon S^p \to S^p$ is bounded and $m(E_\varphi)=0$. If $W_{\phi,\varphi}$ is bounded below on an infinite-dimensional subspace $M\subset S^p$, then the restriction $W_{\phi,\varphi}$ on $M$ fixes an isomorphic copy of $\ell^p$ in $M$. In particular, if $p\neq2$, the operator $W_{\phi,\varphi}$ does not fix any isomorphic copy of $\ell^2$ in $S^p$.
\end{theorem}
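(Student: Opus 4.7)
The plan is to build, inside the given infinite-dimensional subspace $M$, a normalized basic sequence whose image under $W_{\phi,\varphi}$ is equivalent to the canonical $\ell^p$ basis. The natural setting is the identification of $S^p$ with $\mathbb{C}\oplus H^p$ via $f\mapsto(f(0),f')$, under which the problem reduces to one about a sequence of derivatives in $H^p$, where Kadec--Pelczynski type decompositions are available. This mirrors the strategy that should underlie Theorem~\ref{th1}, except that here the bounded-below hypothesis on $M$ replaces the non-compactness of $W_{\phi,\varphi}$.

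First I would invoke the Bessaga--Pelczynski selection principle to extract a normalized basic sequence $(f_n)\subset M$ that is weakly null in $S^p$; this uses only that $M$ is infinite-dimensional together with the fact that the functionals $f\mapsto f(z)$ and $f\mapsto f'(z)$ at interior points are bounded on $S^p$, so that weak nullity forces $f_n$ and $f_n'$ to tend to zero uniformly on compact subsets of $\mathbb{D}$. The bounded-below assumption gives $\|W_{\phi,\varphi}f_n\|_{S^p}\ge c>0$ for all $n$, and the derivative identity
\[
(W_{\phi,\varphi}f_n)'(z)=\phi'(z)\,f_n(\varphi(z))+\phi(z)\,\varphi'(z)\,f_n'(\varphi(z))
\]
puts the analysis squarely in $H^p$. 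Since $m(E_\varphi)=0$, we have $\varphi(\zeta)\in\mathbb{D}$ for almost every $\zeta\in\partial\mathbb{D}$, so the radial values $f_n(\varphi(\zeta))$ and $f_n'(\varphi(\zeta))$ converge to $0$ a.e.\ on $\partial\mathbb{D}$. A Kadec--Pelczynski type argument on $((W_{\phi,\varphi}f_n)')$ in $H^p$ then yields, after passing to a subsequence, equivalence with the $\ell^p$ unit vector basis; transferred back to $S^p$ via the identification above, this produces the desired $\ell^p$-copy sitting inside $M$.

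The ``in particular'' statement follows from the first part by a short contradiction. Suppose $W_{\phi,\varphi}$ fixes an isomorphic copy $N\subset S^p$ of $\ell^2$; then $W_{\phi,\varphi}|_N$ is an isomorphism onto its image and in particular bounded below on $N$. Applying the first part with $M:=N$ produces an isomorphic copy of $\ell^p$ inside $N\cong\ell^2$, contradicting the classical fact (Pitt-type) that $\ell^2$ contains no isomorphic copy of $\ell^p$ when $p\neq 2$.

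The main obstacle I expect is the Kadec--Pelczynski step: one must show that the non-$\ell^p$-basic alternative would force the boundary values of $(W_{\phi,\varphi}f_n)'$ into an equi-integrable family, which combined with the a.e.\ pointwise convergence to zero would give $\|(W_{\phi,\varphi}f_n)'\|_{H^p}\to 0$ and contradict the bounded-below estimate. Extra care is needed for $p=1$, where the non-reflexivity of $H^1$ makes both the extraction of a weakly null basic sequence in $S^1$ and the $L^1$-side of Kadec--Pelczynski more delicate, typically requiring a uniform integrability argument in place of weak compactness.
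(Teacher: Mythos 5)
Your overall strategy is the one the paper uses: produce a normalized sequence in $M$ tending to $0$ uniformly on compact subsets, use $m(E_\varphi)=0$ to force the boundary mass of $(W_{\phi,\varphi}f_n)'$ off the sets where $|\varphi|$ is close to $1$, disjointify to get equivalence with the $\ell^p$ basis, transfer back to $M$ via the bounded-below hypothesis, and deduce the $p\neq2$ statement by the fact that $\ell^2$ contains no copy of $\ell^p$. The only packaging difference is that you invoke an abstract Kadec--Pelczynski/Egorov concentration argument on the boundary values, whereas the paper carries out the same disjointification by hand through the sets $E_k=\{\xi:|\varphi(\xi)|\ge 1-1/k\}$ and the gliding-hump scheme already set up in the proof of Theorem~\ref{th1}; these are interchangeable, and your a.e.\ convergence of $(W_{\phi,\varphi}f_n)'$ on $\partial\mathbb{D}$ is legitimate because boundedness of $W_{\phi,\varphi}$ forces $\phi'\in H^p$ and $\phi\varphi'\in H^p$.

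The one genuine weak point is your first step. You extract the sequence $(f_n)\subset M$ as a normalized \emph{weakly null} basic sequence via Bessaga--Pelczynski, but such a sequence need not exist when $p=1$: $S^1$ contains isomorphic copies of $\ell^1$, and if $M$ is such a copy it has the Schur property, so every normalized sequence in $M$ fails to be weakly null. Your closing remark that $p=1$ ``requires a uniform integrability argument'' addresses the $L^1$ side of Kadec--Pelczynski, but not this extraction, which is where the argument as written breaks. There are two easy repairs: (i) the paper's route, which needs no weak compactness at all --- since the Taylor coefficient functionals are bounded on $S^p$ and $M$ is infinite dimensional, one can pick unit vectors $f_n\in M$ vanishing to order $n$ at the origin, and the coefficient bounds then give $f_n\to0$ (hence $f_n'\to0$) uniformly on compacta for every $1\le p<\infty$; or (ii) for $p=1$, split by Rosenthal's $\ell^1$ theorem, noting that if $M$ contains a copy of $\ell^1$ then the bounded-below hypothesis makes that copy fixed and there is nothing to prove, while otherwise weakly Cauchy differences supply the weakly null normalized sequence you want. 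With either repair, the rest of your argument, including the transfer of the $\ell^p$ equivalence from $(W_{\phi,\varphi}f_{n_j})$ back to $(f_{n_j})\subset M$ using boundedness and the bounded-below assumption, and the contradiction argument for the ``in particular'' clause, is sound and agrees with the paper.
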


When $m(E_\varphi)>0$, it holds that
\begin{theorem}
\label{th3}
Let $1 \leq p <\infty$, $\phi\in H(\mathbb D)$ and $\varphi$ is an analytic self-map of $\mathbb D$.  Suppose that $W_{\phi,\varphi}\colon S^p \to S^p$ is bounded. if $m(E_\varphi)>0$ and $\phi\varphi'\neq0$, then the operator $W_{\phi,\varphi}$ fixes an isomorphic copy of $\ell^2$ in $S^p$.
\end{theorem}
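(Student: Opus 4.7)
The plan is to find a concrete lacunary sequence in $S^p$ equivalent to the $\ell^2$-basis on which $W_{\phi,\varphi}$ is bounded below. Fix a rapidly lacunary integer sequence $(n_k)$ (say $n_k = 2^k$) and set $f_k(z) = z^{n_k}/n_k$. Then $\|f_k\|_{S^p} = \|z^{n_k-1}\|_{H^p} = 1$, and the Paley--Zygmund lacunary inequality in $H^p$ (applied to the derivatives $f_k'(z) = z^{n_k-1}$) identifies the closed span of $(f_k)$ in $S^p$ with a subspace isomorphic to $\ell^2$ via the canonical basis. The target is to verify that $W_{\phi,\varphi}$ is bounded below on this span.

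For $f = \sum_k a_k f_k$, Leibniz's rule gives
\[
(W_{\phi,\varphi}f)'(\zeta) \;=\; \phi'(\zeta)\sum_k \frac{a_k}{n_k}\,\varphi(\zeta)^{n_k} \;+\; \phi(\zeta)\varphi'(\zeta)\sum_k a_k\,\varphi(\zeta)^{n_k-1}.
\]
Cauchy--Schwarz bounds the first summand pointwise by $|\phi'(\zeta)|\bigl(\sum_k 1/n_k^2\bigr)^{1/2}\|a\|_{\ell^2}$; its $H^p$-norm is therefore an arbitrarily small multiple of $\|a\|_{\ell^2}$ after truncation to large $k$, a negligible perturbation. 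Restricting the dominant summand to $E_\varphi$, where $|\varphi|=1$, the desired lower bound for $\|W_{\phi,\varphi}f\|_{S^p}$ reduces to
\[
\int_{E_\varphi}|\phi\varphi'|^p\,\Bigl|\sum_k a_k\,\varphi^{n_k-1}\Bigr|^p dm \;\gtrsim\;\|a\|_{\ell^2}^p.
\]

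The natural tool is Khintchine's inequality with independent Rademacher signs $(\epsilon_k)$: using $|\varphi|=1$ on $E_\varphi$ pointwise,
\[
\mathbb{E}_\epsilon\!\int_{E_\varphi}\!|\phi\varphi'|^p\Bigl|\sum_k \epsilon_k a_k\,\varphi^{n_k-1}\Bigr|^p dm \;\asymp\; \|a\|_{\ell^2}^p\int_{E_\varphi}|\phi\varphi'|^p dm.
\]
The right-hand integral is strictly positive: $\phi\in H^\infty$ has non-vanishing boundary values a.e., while Julia--Carath\'eodory supplies a finite nonzero angular derivative of $\varphi$ at a.e.\ point of $E_\varphi$, so $\phi\varphi'\neq 0$ a.e.\ on $E_\varphi$; finiteness follows from $\phi\varphi' = (\phi\varphi)' - \phi'\varphi \in H^p$ since $\phi,\phi\varphi \in S^p$. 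This yields the averaged lower bound.

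The main obstacle is converting the Khintchine-averaged estimate into a uniform-in-$a$ basic-sequence bound. I see two complementary routes. The first is probabilistic: pick a random sign vector $(\epsilon_k)$, set $\tilde f_k = \epsilon_k f_k$, and use Fubini with a Borel--Cantelli argument on a countable dense subset of the coefficients to show that almost surely $(\tilde f_k)$ is a basic sequence realising the $\ell^2$-lower bound. The second pushes the integral to $\partial\mathbb{D}$ via the Borel measure $\mu = \varphi_*(|\phi\varphi'|^p\, m|_{E_\varphi})$, establishes (using the a.e.\ existence and non-triviality of the angular derivative on $E_\varphi$) that $\mu$ is absolutely continuous with respect to Lebesgue measure, and then applies Zygmund's classical lacunary $L^p$ theorem on an arc where the Radon--Nikod\'ym density of $\mu$ is bounded below, yielding the deterministic inequality $\int |\sum_k a_k\,\eta^{n_k-1}|^p d\mu(\eta) \gtrsim \|a\|_{\ell^2}^p$. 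Either route produces $\|W_{\phi,\varphi}f\|_{S^p}\gtrsim\|a\|_{\ell^2}$ and finishes the proof.
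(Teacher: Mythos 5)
Your overall architecture parallels the paper's: reduce to the weighted composition operator with symbol $\phi\varphi'$ acting on lacunary monomials (the paper does this through the isometry $T_z\colon H^p\to S^p_0$), dispose of the $\phi'$-term as a negligible perturbation (your Cauchy--Schwarz tail truncation is fine, and arguably simpler than the paper's appeal to compactness of $W_{\phi',\varphi}\circ T_z$), and use Paley's theorem for the $\ell^2$-structure of the span of the test functions. The positivity of $\int_{E_\varphi}|\phi\varphi'|^p\,dm$ is also fine, but your justification is off: Julia--Carath\'eodory does not give a finite angular derivative at a.e.\ point of $E_\varphi$ (by Frostman's criterion there are inner functions, so $m(E_\varphi)=1$, with no finite angular derivative on a set of full measure); the correct one-line argument is the one you already have in hand, namely that $\phi\varphi'=(\phi\varphi)'-\phi'\varphi\in H^p$ is not identically zero, hence has nonvanishing boundary values a.e. The real issue is that the heart of the theorem is exactly the uniform-in-$a$ lower bound $\int_{E_\varphi}|\phi\varphi'|^p\,|\sum_k a_k\varphi^{n_k-1}|^p\,dm\gtrsim\|a\|_{\ell^2}^p$, and at this point your argument stops being a proof: the paper obtains it by invoking the proof of Theorem~2 of Lindstr\"om--Miihkinen--Nieminen \cite{LMN}, where the lacunary subsequence is carefully selected in dependence on $\varphi$; you offer only a Khintchine average and two sketched ``routes.''

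Neither sketch closes the gap as stated. Route 1 cannot work in principle: replacing $a_k$ by $\epsilon_k a_k$ is an isometric bijection of the coefficient space, so a lower bound valid for all coefficients for the randomized system $(\epsilon_k f_k)$ is literally equivalent to the same bound for $(f_k)$ --- random signs cannot create a uniform lower bound that does not already hold deterministically. Moreover, the expectation bound gives (via Paley--Zygmund, after controlling a higher moment) only a fixed positive success probability for each single coefficient vector, not probability close to one, so no Fubini/Borel--Cantelli or union-bound argument over a dense set of coefficients is available. Route 2 is much closer in spirit to what is actually needed, but its pivotal step --- absolute continuity of $\mu=\varphi_*\bigl(|\phi\varphi'|^p\,m|_{E_\varphi}\bigr)$ --- is justified by the same false angular-derivative claim; establishing it requires a genuinely nontrivial input (e.g.\ Aleksandrov--Clark measure disintegration, or the fact that $m(\varphi^{-1}(B)\cap E_\varphi)=0$ for every null set $B$), and one then still needs the Zygmund-type restricted lacunary estimate for tails on a set of positive measure to get the $\ell^2$ lower bound in $L^p$ for $p<2$. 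So the proposal assumes, rather than proves, the key estimate that the paper imports from \cite{LMN}; as written it has a genuine gap, although Route 2, properly substantiated, would amount to reproving that cited result.
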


Notation: throughout this paper, $C$ will represents a positive constant which may be given different values at different occurrences.

\section{Proof of Theorem~\ref{th1}}
This section is devoted to the proof of Theorem~\ref{th1}. First, the following Lemma~\ref{le1} can be deduced from \cite[Theorem~2.1]{CH} and \cite[Theorem~2.2 and Theorem~2.3]{RD}.

\begin{lemma}
\label{le1}
Let $1 \leq p <\infty$, $\phi\in H(\mathbb{D})$ and $\varphi$ is an analytic self-map of $\mathbb D$. Then $W_{\phi,\varphi}\colon S^p \to S^p$ is compact if and only if $\phi\in S^p$ and
$$\lim_{|a|\rightarrow1^-}\int_{\partial\mathbb{D}}\frac{1-|a|^2}{|1-\bar{a}\varphi(\omega)|^2}|\psi(\omega)\varphi'(\omega)|^pdm(\omega)=0\,.$$
\end{lemma}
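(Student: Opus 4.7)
The plan is to derive the claimed characterization by combining the two cited results in a direct fashion. The starting point is the product-and-chain-rule identity
$$(W_{\phi,\varphi}f)'(z) = \phi'(z)\,f(\varphi(z)) + \phi(z)\varphi'(z)\,f'(\varphi(z)),$$
which splits the $S^p$-action of $W_{\phi,\varphi}$ into a ``lower-order'' piece $f\mapsto \phi'\cdot(f\circ\varphi)$ acting on $f\in S^p\subset H^\infty$, and a ``principal'' piece $f\mapsto (\phi\varphi')\cdot(f'\circ\varphi)$, which is nothing but the weighted composition operator $W_{\phi\varphi',\varphi}$ on $H^p$ applied to the $H^p$-function $f'$.

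The first step is to invoke \cite[Theorem~2.1]{CH}, which uses precisely this decomposition to reduce compactness of $W_{\phi,\varphi}$ on $S^p$ to the conjunction of (i) $\phi\in S^p$, which together with the embedding $S^p\subset H^\infty$ and a normal-families argument makes the lower-order piece a compact map from $S^p$ into $H^p$, and (ii) compactness of $W_{\phi\varphi',\varphi}\colon H^p\to H^p$, which controls the principal piece via $f\mapsto f'$. Thus compactness on $S^p$ is equivalent to $\phi\in S^p$ plus compactness of a single weighted composition operator on $H^p$.

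The second step is to apply \cite[Theorems~2.2 and~2.3]{RD}, the standard Carleson-measure characterization of compactness for weighted composition operators on $H^p$: $W_{h,\varphi}\colon H^p\to H^p$ is compact if and only if the pullback measure $|h|^p\,dm\circ\varphi^{-1}$ on $\overline{\mathbb{D}}$ is a vanishing Carleson measure, equivalently
$$\lim_{|a|\to 1^-}\int_{\partial\mathbb{D}} \frac{1-|a|^2}{|1-\bar{a}\varphi(\omega)|^2}\,|h(\omega)|^p\, dm(\omega)=0.$$
Specializing to $h=\phi\varphi'$ reproduces the displayed integral condition (reading the symbol $\psi$ in the statement as $\phi$), so concatenating the two reductions gives the claimed equivalence.

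The only non-formal point, and the place where I expect the main work of \cite[Theorem~2.1]{CH} to sit, is the compactness of the lower-order piece under the sole hypothesis $\phi\in S^p$: one checks that a weakly null sequence $(f_n)\subset S^p$ is uniformly bounded in $H^\infty$ and tends to $0$ uniformly on compact subsets of $\mathbb{D}$, whence $\phi'\cdot(f_n\circ\varphi)\to 0$ in $H^p$ by dominated convergence against the $L^p$-envelope $\sup_n\|f_n\|_{H^\infty}\,|\phi'|$. Once this verification is in hand, the lemma follows as a direct assembly of the two cited characterizations.
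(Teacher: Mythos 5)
Your proposal is correct and is essentially the paper's own argument: the paper gives no proof beyond citing \cite[Theorem~2.1]{CH} (reduction of compactness on $S^p$ to $\phi\in S^p$ plus compactness of $W_{\phi\varphi',\varphi}$ on $H^p$) together with \cite[Theorems~2.2 and~2.3]{RD} (the vanishing Carleson-type integral condition), which is exactly the assembly you describe, including the correct reading of the typo $\psi=\phi$. The only caveat is that your sketch of the compactness of the lower-order piece via weakly null sequences and dominated convergence is delicate when $p=1$ or when $|\varphi|=1$ on a set of positive measure, but since you defer that verification to the cited result (it is \cite[Proposition~3.3(ii)]{CH}, quoted as Lemma~\ref{le2} in the paper), this does not affect the argument.
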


The following lemma~\ref{le2} is proven in \cite[Proposition~3.3(ii)]{CH}.

\begin{lemma}
\label{le2}
Let $1 \leq p \leq\infty$, $\phi\in H^p$ and $\varphi$ is an analytic self-map of $\mathbb D$. Then $W_{\phi,\varphi}\colon S^p \to H^p$ is compact.
\end{lemma}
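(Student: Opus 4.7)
The plan is to exploit the factorisation $W_{\phi,\varphi}=M_\phi\circ C_\varphi$ and reduce the claim to two separate statements: $M_\phi\colon H^\infty\to H^p$ is bounded, and $C_\varphi\colon S^p\to H^\infty$ is compact. The first is immediate, since $\|\phi g\|_{H^p}\leq\|g\|_\infty\|\phi\|_{H^p}$ whenever $\phi\in H^p$ and $g\in H^\infty$. Because $C_\varphi$ is automatically bounded as an operator on $H^\infty$ (by the maximum principle, $\|f\circ\varphi\|_\infty\leq\|f\|_\infty$), the second claim reduces in turn to compactness of the inclusion $\iota\colon S^p\hookrightarrow H^\infty$, which is the heart of the proof.

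To verify that $\iota$ is compact, I would fix a bounded sequence $\{f_n\}\subset S^p$. Since $S^p\subset H^\infty$, the family is uniformly bounded on $\mathbb D$, so by Montel's theorem a subsequence converges locally uniformly on $\mathbb D$ to some $f\in H^\infty$. The key step is promoting this to uniform convergence on $\bar{\mathbb D}$. Because $f_n'\in H^p\subset H^1$, each $f_n$ extends absolutely continuously to $\partial\mathbb D$, with
\[
|f_n(e^{i\theta_1})-f_n(e^{i\theta_2})|\leq\int_{\theta_1}^{\theta_2}|f_n'(e^{i\theta})|\,d\theta.
\]
For $1<p\leq\infty$, Hölder's inequality converts this into $|f_n(e^{i\theta_1})-f_n(e^{i\theta_2})|\leq|\theta_1-\theta_2|^{1/q}\|f_n'\|_{H^p}$ with $q$ conjugate to $p$, so $\{f_n|_{\partial\mathbb D}\}$ is equicontinuous; Arzelà--Ascoli extracts a further subsequence converging uniformly on $\partial\mathbb D$, and the maximum principle propagates the convergence to $\bar{\mathbb D}$, forcing the limit to be $f$. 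With $\|f_n-f\|_\infty\to 0$ in hand, the inequality $\|W_{\phi,\varphi}(f_n-f)\|_{H^p}\leq\|f_n-f\|_\infty\|\phi\|_{H^p}$ tends to $0$ and completes the argument for $p>1$ (and for $p=\infty$ the Lipschitz bound on $f_n$ is even more direct).

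The main obstacle is the endpoint $p=1$: bounded subsets of $H^1$ need not be uniformly integrable, so Hölder cannot manufacture a modulus of continuity on $\partial\mathbb D$. My contingency at $p=1$ is to bypass the compact embedding altogether and argue directly that $\phi(f_n-f)\circ\varphi\to 0$ in $H^1$. On $\partial\mathbb D\setminus E_\varphi$ the boundary limits of $\varphi$ lie in $\mathbb D$, and the pointwise convergence $f_n(\varphi(\xi))\to f(\varphi(\xi))$ combined with the domination $|\phi f_n\circ\varphi|\leq M|\phi|\in L^1$ yields the required decay by dominated convergence. On $E_\varphi$ one needs control of $f_n-f$ on $\varphi(E_\varphi)\subset\partial\mathbb D$, which can be obtained from equi-integrability of $\{f_n'\}$ on small arcs (extracted from a Dunford--Pettis-type argument on the weakly convergent part of $\{f_n'\}$ in $L^1(\partial\mathbb D)$); this is where the bulk of the technical work lies.
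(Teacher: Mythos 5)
Your factorization argument settles the case $1<p\leq\infty$ correctly: bounded sets in $S^p$ with $p>1$ are uniformly bounded and uniformly H\"older of exponent $1/q$ on $\partial\mathbb{D}$ (via absolute continuity of the boundary values and H\"older's inequality), so the embedding $S^p\hookrightarrow H^\infty$ is compact by Arzel\`a--Ascoli and the maximum principle, and composing with the bounded maps $C_\varphi\colon H^\infty\to H^\infty$ and $M_\phi\colon H^\infty\to H^p$ finishes the proof. Note that the paper itself offers no proof of this lemma --- it is quoted from \cite[Proposition~3.3(ii)]{CH} --- so there is no ``official'' argument to match; your route is a legitimate self-contained one for $p>1$.

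The genuine gap is the endpoint $p=1$, and the contingency you sketch does not work as described. There is no ``weakly convergent part'' of a bounded sequence in $H^1$ to extract: the unit ball of $L^1$ is not weakly sequentially compact, and the Dunford--Pettis theorem takes uniform integrability as a hypothesis rather than producing it. The canonical counterexample is exactly the family relevant here: for the test functions $f_a$ of the paper, $|f_a'|$ restricted to $\partial\mathbb{D}$ is a Poisson kernel, a bounded but concentrating (non-uniformly-integrable) family in $L^1$; these same functions also show that $S^1\hookrightarrow H^\infty$ is not compact, so no equicontinuity on $\varphi(E_\varphi)$ can be recovered this way. The statement is nevertheless true at $p=1$, and a correct completion avoids uniform integrability of $(f_n')$ altogether: if $\|f_n\|_{S^1}\leq 1$, then the boundary restrictions $f_n|_{\partial\mathbb{D}}$ are uniformly bounded (since $S^1\subset H^\infty$) and have uniformly bounded variation (total variation $\leq 2\pi\|f_n'\|_{H^1}$), so Helly's selection theorem gives a subsequence converging pointwise \emph{everywhere} on $\partial\mathbb{D}$; combined with Montel in the interior, $f_n\circ\varphi$ then converges a.e.\ on all of $\partial\mathbb{D}$ (on $E_\varphi$ as well as off it), and dominated convergence with majorant $2M|\phi|\in L^1$ yields $\|W_{\phi,\varphi}(f_n-f)\|_{H^1}\to 0$. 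In fact this Helly-plus-domination argument works verbatim for every $1\leq p<\infty$, so it would replace your two-case structure by a single unified proof, with the Lipschitz/Arzel\`a--Ascoli argument needed only at $p=\infty$.
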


We employ
the test functions
$$f_a(z) = \int_0^z\frac{(1-|a|^2)^{1/p}}{(1-\bar{a}\omega)^{2/p}}d\omega\,,
   \quad z \in \mathbb{D},$$
where $a \in \mathbb{D}$. They all satisfy $\|f_a\|_{S^p} = 1$ and $f_{a}$ converges to $0$ uniformly on compact subsets of $\mathbb{D}$, as $|a|\to1^-$\,.

Let $L=\{\xi\in\partial\mathbb{D}:\ \text{the radial limit }\varphi(\xi)\ \text{exists}\}$ and
$$E_{\varepsilon}:=\{\xi\in L:\ |1-\varphi(\xi)|<\varepsilon\}$$
for any given $\varepsilon>0$, then $m(\partial\mathbb{D}\setminus L)=0$. The proof of Theorem~\ref{th1} relies on the following auxiliary lemma.

\begin{lemma}
\label{le3}
Let $(a_n) \subset \mathbb{D}$ be a sequence such that $0 < |a_1| < |a_2| < \ldots < 1$ and $a_n \to 1$. If the bounded operator $W_{\phi,\varphi}\colon S^p \to S^p$ is not compact, then we have
\begin{align*}
&\textrm{(1) $\lim_{\varepsilon \to 0}\int_{E_\varepsilon}|(W_{\phi,\varphi} f_{a_n})'|^p dm = 0$\quad for every $n\in\mathbb{N}$.}&\\
&\textrm{(2) $\lim_{n \rightarrow \infty}\int_{\partial\mathbb{D}\setminus E_\varepsilon}|(W_{\phi,\varphi} f_{a_n})'|^p dm = 0$\quad for every $\varepsilon > 0.$}&
\end{align*}
\end{lemma}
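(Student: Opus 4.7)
The plan is to prove (1) via absolute continuity of the Lebesgue integral, and (2) by a pointwise estimate exploiting the concentration of $f_{a_n}$ and $f'_{a_n}$ near $1\in\partial\mathbb{D}$. Throughout I will use the product-rule decomposition
$$(W_{\phi,\varphi}f_{a_n})'(z) \;=\; \phi'(z)\, f_{a_n}(\varphi(z)) + \phi(z)\varphi'(z)\, f'_{a_n}(\varphi(z)),$$
together with the explicit formula $f'_{a_n}(w)=(1-|a_n|^2)^{1/p}(1-\bar a_n w)^{-2/p}$. Applying the bounded operator $W_{\phi,\varphi}$ to the constant $1$ and to $z$ gives $\phi,\phi\varphi\in S^p\subset H^\infty$, and combining this with $(\phi\varphi)'=\phi'\varphi+\phi\varphi'$ yields $\phi',\,\phi\varphi'\in H^p$; these will serve as the integrable weights in the arguments below.

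For (1), fix $n$. Since $f_{a_n}\in S^p$ and $W_{\phi,\varphi}$ is bounded on $S^p$, we have $(W_{\phi,\varphi}f_{a_n})'\in H^p\subset L^p(\partial\mathbb{D})$, so by absolute continuity of the integral it suffices to check that $m(E_\varepsilon)\to 0$ as $\varepsilon\to 0^+$. Now $\bigcap_{\varepsilon>0}E_\varepsilon=\{\xi\in L:\varphi(\xi)=1\}$, and since $\varphi(\mathbb{D})\subset\mathbb{D}$, the function $1-\varphi$ is a non-identically-zero bounded analytic function; by the Lusin--Privalov uniqueness theorem its radial boundary values vanish only on a null set, so continuity of measure gives $m(E_\varepsilon)\to 0$, and (1) is immediate.

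For (2), fix $\varepsilon>0$ and apply $(a+b)^p\leq 2^{p-1}(a^p+b^p)$ to the decomposition above. On $\partial\mathbb{D}\setminus E_\varepsilon$ we have $|1-\varphi(\xi)|\geq\varepsilon$, so for $n$ large enough (with $|1-\bar a_n|\leq\varepsilon/2$) the triangle inequality gives $|1-\bar a_n\varphi(\xi)|\geq\varepsilon/2$. The second term is then immediate: $|f'_{a_n}(\varphi(\xi))|^p\leq(4/\varepsilon^2)(1-|a_n|^2)$, and integration against $|\phi\varphi'|^p\in L^1$ gives a bound of order $(1-|a_n|^2)\to 0$. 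For the first term I will integrate $f'_{a_n}$ explicitly (a logarithm when $p=2$, the power $(1-\bar a_n w)^{1-2/p}$ otherwise) and separate the cases $p=2$, $p>2$, and $1\leq p<2$; in each regime the bound $|1-\bar a_n\varphi(\xi)|\geq\varepsilon/2$ keeps us uniformly away from the singularity and delivers $|f_{a_n}(\varphi(\xi))|\leq C_\varepsilon(1-|a_n|^2)^{1/p}$, so integration against $|\phi'|^p\in L^1$ produces another bound of order $(1-|a_n|^2)\to 0$. The only genuine annoyance is the sub-case $1\leq p<2$, where the antiderivative $(1-\bar a_n w)^{1-2/p}$ carries a negative exponent and blows up as $w\to 1/\bar a_n$; off $E_\varepsilon$ we stay at distance at least $\varepsilon/2$ from this singularity, so the singular factor is absorbed into $C_\varepsilon$ and then swallowed by the vanishing prefactor $(1-|a_n|^2)^{1/p}$.
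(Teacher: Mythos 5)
Your proposal is correct, and it matches the paper on part (1) (absolute continuity plus the fact that $\{\xi\in L:\varphi(\xi)=1\}$ is null, so $m(E_\varepsilon)\to 0$; the paper deduces $\varphi\not\equiv 1$ from non-compactness, you from $\varphi(\mathbb{D})\subset\mathbb{D}$) and on the second term of (2), where both arguments use $|1-\bar a_n\varphi(\xi)|\geq \varepsilon/2$ off $E_\varepsilon$ and the bound $\frac{4(1-|a_n|^2)}{\varepsilon^2}\int_{\partial\mathbb{D}}|\phi\varphi'|^p\,dm$. Where you genuinely diverge is the term $\int_{\partial\mathbb{D}\setminus E_\varepsilon}|\phi' f_{a_n}(\varphi)|^p\,dm$: the paper disposes of it by quoting Lemma~\ref{le2} (compactness of $W_{\phi',\varphi}\colon S^p\to H^p$, from Contreras--Hern\'andez-D\'iaz), which gives $\|W_{\phi',\varphi}f_{a_n}\|_{H^p}\to 0$ over the whole circle; you instead integrate $f'_{a_n}$ in closed form and use the same distance-to-singularity bound to get $|f_{a_n}(\varphi(\xi))|\leq C_\varepsilon(1-|a_n|^2)^{1/p}$ a.e.\ off $E_\varepsilon$, then integrate against $|\phi'|^p\in L^1$. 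Likewise, you obtain the needed memberships $\phi'\in H^p$ and $\phi\varphi'\in H^p$ elementarily by applying $W_{\phi,\varphi}$ to $1$ and to $z$ and using $\phi\varphi'=(\phi\varphi)'-\phi'\varphi$ with $|\varphi|\leq 1$, where the paper cites boundedness characterizations from the literature. The trade-off: your route is self-contained and quantitative (explicit $O\bigl((1-|a_n|^2)\bigr)$ decay), at the price of the three-case analysis in $p$ (with the logarithmic antiderivative at $p=2$ and the negative exponent for $1\leq p<2$, which you correctly keep away from its singularity on $\partial\mathbb{D}\setminus E_\varepsilon$); the paper's route is shorter and yields the stronger statement that this term vanishes over all of $\partial\mathbb{D}$, which is reused in the proof of Theorem~\ref{th1}, though only the restricted version is needed for the lemma itself.
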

\begin{proof}
\textbf{(1)} For each fixed $n$, this follows immediately from the absolute continuity of Lebesgue measure, the boundedness of operator $W_{\phi,\varphi}$ and the fact that $W_{\phi,\varphi}$ is not compact (which implies that $\varphi$ is not identically $1$)\,.

\textbf{(2)} For any given $\varepsilon > 0$, let $\xi\in L\setminus E_\varepsilon$. Then there exists an $N>0$ such that whenever $n>N$, it holds that
\begin{equation}\begin{split}\nonumber
|1-\bar{a}_n\varphi(\xi)|&=|1-\varphi(\xi)+\varphi(\xi)-\bar{a}_n\varphi(\xi)|\\
&\geq |1-\varphi(\xi)|-|\varphi(\xi)-\bar{a}_n\varphi(\xi)|\\
&\geq |1-\varphi(\xi)|-|1-\bar{a}_n|>\frac{\varepsilon}{2}\,.
\end{split}\end{equation}

Now, by definition, we have
$$\int_{\partial\mathbb{D}\setminus E_\varepsilon}|(W_{\phi,\varphi} f_{a_n})'|^pdm\leq C\left(\int_{\partial\mathbb{D}\setminus E_\varepsilon}|\phi'f_{a_n}(\varphi)|^p dm+\int_{\partial\mathbb{D}\setminus E_\varepsilon}|\phi\varphi'f'_{a_n}(\varphi)|^pdm\right).$$
Since $W_{\phi,\varphi}$ is bounded, it follows that $\phi\in S^p$, that is, $\phi'\in H^p$. By Lemma~\ref{le2}, $W_{\phi',\varphi}:\ S^p\to H^p$ is compact, which implies that
$$\lim_{n\to\infty}\int_{\partial\mathbb{D}\setminus E_\varepsilon}|\phi'f_{a_n}(\varphi)|^p dm\leq\lim_{n\to\infty}\int_{\partial\mathbb{D}}|W_{\phi',\varphi} f_{a_n}|^pdm=0\,.$$

For the estimate of the second integral, we have
\begin{equation}\begin{split}\nonumber
\int_{\partial\mathbb{D}\setminus E_\varepsilon}|\phi\varphi'f'_{a_n}(\varphi)|^pdm&=\int_{\partial\mathbb{D}\setminus E_\varepsilon}|\phi\varphi'|^p\frac{1-|a_n|^2}{|1-\bar{a}_n\varphi|^2}dm\\
&=\int_{\partial\mathbb{D}\setminus E_\varepsilon}|\phi\varphi'|^p\frac{1-|a_n|^2}{|1-\bar{a}_n\varphi|^2}dm\\
&\leq\frac{4(1-|a_n|^2)}{\varepsilon^2}\int_{\partial\mathbb{D}}|\phi\varphi'|^p dm\,,
\end{split}\end{equation}
where $\int_{\partial\mathbb{D}}|\phi\varphi'|^p dm$ is finite due to the boundedness of $W_{\phi,\varphi}:S^p\to S^p$ and \cite[Theorem~2.1]{CH} and \cite[Theorem~4]{ZCRZ}.

Therefore,
$$\lim_{n\to\infty}\int_{\partial\mathbb{D}\setminus E_\varepsilon}|\phi\varphi'f'_{a_n}(\varphi)|^pdm=0\,,$$
The proof is complete.
\end{proof}

Now, we are ready to give a proof of Theorem~\ref{th1}.
\begin{proof}[Proof of Theorem~\ref{th1}]
First, we prove that there exists a sequence $(a_n) \subset \mathbb{D}$ with $0 < |a_1| < |a_2| < \ldots < 1$ and $a_n \to \omega \in \partial\mathbb{D},$ such that there is a positive constant $h$ such that
$$\|W_{\phi,\varphi}(f_{a_n})\|_{H^p}\geq h>0$$
holds for all $n\in\mathbb{N}$\,.

Since $W_{\phi,\varphi}:S^p\to S^p$ is not compact, Lemma~\ref{le1}, there exists a sequence $(a_n) \subset \mathbb{D}$ with $0 < |a_1| < |a_2| < \ldots < 1$ and $a_n \to \omega \in \partial\mathbb{D},$ such that there is a positive constant $h$ such that
$\|\phi\varphi'f'_{a_n}(\varphi)\|_{H^p}\geq 2h>0$ holds for all $n\in\mathbb{N}$\,. Note that
$$\|W_{\phi,\varphi}(f_{a_n})\|_{H^p}\geq\|\phi\varphi'f'_{a_n}(\varphi)\|_{H^p}-\|\phi'f_{a_n}(\varphi)\|_{H^p}\,.$$
By Lemma~\ref{le2}, $W_{\phi',\varphi}:\ S^p\to H^p$ is compact, which implies that
$$\lim_{n\to\infty}\|\phi'f_{a_n}(\varphi)\|_{H^p}=0\,.$$
Hence, there exists a subsequence of $(a_n)$ (denoted still by $(a_n)$) such that the above claim holds. We assume without loss of generality that $a_n \rightarrow1$ as $n\rightarrow\infty$ by utilizing a suitable rotation.

Then by Lemma~\ref{le3} and induction method, we are able to choose a decreasing  positive sequence $(\varepsilon_n)$ such that $E_{\varepsilon_1}=\partial\mathbb{D}$ and $\lim_{n\rightarrow\infty}\varepsilon_n=0$, and a subsequence $(b_n) \subset (a_n)$ such that the following three conditions hold:
\begin{eqnarray*}
&\textup{(1)}& \left(\int_{E_{\varepsilon_n}} |(W_{\phi,\varphi} f_{b_k})'|^p dm \right)^{1/p} < 4^{-n} \delta h, \quad k = 1,\ldots, n - 1; \\
&\textup{(2)}& \left(\int_{\partial\mathbb{D} \setminus E_{\varepsilon_n}} |(W_{\phi,\varphi} f_{b_n})'|^p dm \right)^{1/p} < 4^{-n} \delta h; \\
&\textup{(3)}& \left(\int_{E_{\varepsilon_n}} |(W_{\phi,\varphi} f_{b_n})'|^p dm \right)^{1/p} > \frac{h}{2}
\end{eqnarray*}
for every $n \in \mathbb{N},$ where $\delta > 0$ is a small constant whose value will be determined later.

Now we are ready to prove that there is a $C>0$ such that the inequality $\|\sum_{j=1}^\infty c_jW_{\phi,\varphi}(f_{b_j})\|_{S^p} \geq C \|(c_j)\|_{\ell^p}$ holds. By the triangle inequality in $L^p$, we have

\begin{equation}\begin{split}\nonumber
\|\sum_{j=1}^\infty c_jW_{\phi,\varphi}(f_{b_j})\|^p_{S^p}&\geq \|\sum_{j=1}^\infty \left(c_jW_{\phi,\varphi}(f_{b_j})\right)'\|^p_{H^p}\\
&=\sum_{n = 1}^\infty \int_{E_{\varepsilon_n} \setminus E_{\varepsilon_{n+1}}}\left|\sum_{j=1}^\infty \left(c_jW_{\phi,\varphi}(f_{b_j})\right)'\right|^p dm\\
&\geq\sum_{n = 1}^\infty \Bigg(|c_n| \left(\int_{E_{\varepsilon_n} \setminus E_{\varepsilon_{n+1}}}| \left(W_{\phi,\varphi}(f_{b_n})\right)'|^p dm\right)^{\frac{1}{p}}\\
&\phantom{=}-\sum_{j \neq n}|c_j|\left(\int_{E_{\varepsilon_n} \setminus E_{\varepsilon_{n+1}}}|\left(W_{\phi,\varphi}(f_{b_j})\right)'|^p dm\right)^{\frac{1}{p}} \Bigg)^p\,.
\end{split}\end{equation}

Observe that for every $n \in \mathbb{N},$ we have
\begin{equation}\begin{split}\nonumber
&\phantom{=}\left(\int_{E_{\varepsilon_n} \setminus E_{\varepsilon_{n+1}}}| \left(W_{\phi,\varphi}(f_{b_n})\right)'|^p dm\right)^{\frac{1}{p}}\\
&=\left(\int_{E_{\varepsilon_{n}}}| \left(W_{\phi,\varphi}(f_{b_n})\right)'|^p dm-\int_{E_{\varepsilon_{n+1}}}| \left(W_{\phi,\varphi}(f_{b_n})\right)'|^p dm\right)^{1/p}\\
&\geq \left(\left(\frac{h}{2}\right)^p-\left(4^{-n-1} \delta h\right)^p\right)^{1/p}\\
&\geq \frac{h}{2}-4^{-n-1} \delta h
\end{split}\end{equation}
according to conditions (1) and (3) above, where the last estimate holds for $1\leq p<\infty$.

Moreover, by condition (1) and (2), it holds that
$$\left(\int_{E_{\varepsilon_n} \setminus E_{\varepsilon_{n+1}}}|\left(W_{\phi,\varphi}(f_{b_j})\right)'|^p dm\right)^{\frac{1}{p}} < 2^{-n-j}\delta h\quad\text{ for } j \ne n.$$

Consequently, we obtain that
\begin{equation}\begin{split}\nonumber
\|\sum_{j=1}^\infty c_jW_{\phi,\varphi}(f_{b_j})\|_{S^p}&\geq\left(\sum_{n = 1}^\infty \left(|c_n| \left(\frac{h}{2}-4^{-n-1} \delta h\right)
-2^{-n}\delta h\|(c_j)\|_{\ell^p} \right)^p\right)^{1/p}\\
&\geq \left(\sum_{n = 1}^\infty \left(|c_n| \left(\frac{h}{2}\right)
-2^{-n+1}\delta h\|(c_j)\|_{\ell^p} \right)^p\right)^{1/p}\\
&\geq \frac{h}{2}\|(c_j)\|_{\ell^p}-\delta h\|(c_j)\|_{\ell^p} \left(\sum_{n = 1}^\infty2^{-(n-1)p}\right)^{1/p}\\
&\geq h\left(\frac{1}{2}-\delta\left(1-2^{-p}\right)^{-1/p}\right)\|(c_j)\|_{\ell^p}\geq C\|(c_j)\|_{\ell^p}\,,
\end{split}\end{equation}
where the last inequality holds when we choose $\delta$ small enough.

On the other hand, we are to prove the converse inequality:
$$\|\sum_{j=1}^\infty c_jW_{\phi,\varphi}(f_{b_j})\|_{S^p} \leq C \|(c_j)\|_{\ell^p}\,.$$

By definition,
$$\|\sum_{j=1}^\infty c_jW_{\phi,\varphi}(f_{b_j})\|_{S^p}=|\sum_{j=1}^\infty c_j\phi(0)f_{b_j}(\varphi(0))|+\|\sum_{j=1}^\infty \left(c_jW_{\phi,\varphi}(f_{b_j})\right)'\|_{H^p}\,.$$
First, we note that a straightforward variant of the above procedure also gives
$$\|\sum_{j=1}^\infty \left(c_jW_{\phi,\varphi}(f_{b_j})\right)'\|_{H^p}\leq C \|(c_j)\|_{\ell^p}\,.$$
Next, when $p=1$, since $\lim_{j\rightarrow\infty}f_{b_j}(\varphi(0))=0$, it is trivial that
$$|\sum_{j=1}^\infty c_j\phi(0)f_{b_j}(\varphi(0))|\leq C \|(c_j)\|_{\ell^1}\,.$$
When $1<p<\infty$, we can choose a subsequence of $(b_j)$ (still denoted by $(b_j)$) such that $\{(1-|b_j|^2)^{1/p}\}_{j=1}^\infty\in\ell^q$, where $1/p+1/q=1$\,. Then by H\"{o}lder's inequality,
$$|\sum_{j=1}^\infty c_j\phi(0)f_{b_j}(\varphi(0))|\leq C \|(c_j)\|_{\ell^p}\,.$$
Accordingly, the desired inequality follows.

By choosing $\phi=1$ and $\varphi=z$, we obtain that
$$C \|(c_j)\|_{\ell^p}\leq \|\sum_{j=1}^\infty c_jf_{b_j}\|_{S^p} \leq C \|(c_j)\|_{\ell^p}\,.$$

Thus, we have
$$\|\sum_{j=1}^\infty c_jW_{\phi,\varphi}(f_{b_j})\|_{S^p} \geq C\|\sum_{j=1}^\infty c_jf_{b_j}\|_{S^p}\,$$
The proof is complete.
\end{proof}

\section{Proof of Theorem~\ref{th2}}
In this section, we give the proof of Theorem~\ref{th2}.
\begin{proof}[Proof of Theorem~\ref{th2}]
Since $M$ is the infinite-dimensional subspace of $S^p$ and polynomials are dense in $S^p$ (see \cite{LIN}), there exists a sequence $(f_n)$ of unit vectors in $M$ such that $f_n$ converges to $0$ uniformly on compact subsets of $\mathbb{D}$. since $W_{\phi,\varphi}$ is bounded below on $M\subset S^p$, there exists $h>0$ such that
$$\|W_{\phi,\varphi}f_n\|_{S^p}>h\,,$$
for all $n\in\mathbb{N}$\,. For $k\geq1$, denote $E_k:=\{\xi\in\partial\mathbb{D}:\ |\varphi(\xi)|\geq1-1/k\}$\,. Since by assumption, $\lim_{k\to\infty}m(E_k)=m(E_\varphi)=0$, it holds that
$$\lim_{k \to \infty}\int_{E_k}|(W_{\phi,\varphi} f_{n})'|^p dm = 0$$
for every $n\in\mathbb{N}$.
Moreover, since $f_n$ converges to $0$ uniformly on compact subsets of $\mathbb{D}$, it follows that
$$\lim_{n \rightarrow \infty}\int_{\partial\mathbb{D}\setminus E_k}|(W_{\phi,\varphi} f_{n})'|^p dm = 0$$
for every $k\in \mathbb{N}.$

The remainder of the proof is an argument that goes exactly as the proof of Theorem~\ref{th1}, so we omit it. Thus, the proof is complete.
\end{proof}

\section{Proof of Theorem~\ref{th3}}
In this last section, we give a proof for Theorem~\ref{th3}.
\begin{proof}[Proof of Theorem~\ref{th3}]
We define the subspace $S^p_0$ of $S^p$ by
$$S^p_0:=\{f\in S^p:\ f(0)=0\}\,.$$
Then the integral operator $T_z:\ f\mapsto\int_0^zf(\zeta)d\zeta$ is an isometric isomorphism from $H^p$ onto $S^p_0$\,. Then the weighted composition operator $W_{\phi,\varphi}$ on $S^p_0$ is unitary similar to the operator 
$$T:=W_{\phi',\varphi}\circ T_z+W_{\phi\varphi',\varphi} \text{ on } H^p\,.$$

By Lemma~\ref{le2}, \cite[Theorem~2.1]{CH} and the expression of the operator $T$, we see that $W_{\phi,\varphi}$ is bounded on $S^p$ if and only if $\phi\in S^p$ and $W_{\phi,\varphi}$ is bounded on $S^p_0$\,.

Now, for the operator $W_{\phi\varphi',\varphi}$ on $H^p$, we can deduce from the proof of \cite[Theorem~2]{LMN} that there exists a sequence of integers $(n_k)$ satisfying $\inf_k(n_{k+1}/n_{k})>1$ and a positive constant $C$ such that
$$\|\sum_{k}c_kW_{\phi\varphi',\varphi}(e_{n_k})\|_{H^p}\geq C\|(c_k)\|_{\ell^2}\,,$$
where $e_{n_k}:=z^{n_k}$ is the unit vector in $H^p$. Since the operator $W_{\phi',\varphi}\circ T_z: H^p\to H^p$ is compact (it is equivalent to the compactness of $W_{\phi',\varphi}:S^p\to H^p$, which is claimed by Lemma~\ref{le2}), then for any $\varepsilon>0$, there exists a subsequence of $(n_k)$ (still denoted as $(n_k)$) such that
$$\|\sum_{k}c_kW_{\phi',\varphi}\circ T_z(e_{n_k})\|_{H^p}\leq \varepsilon\|(c_k)\|_{\ell^2}\,.$$
Thus,
$$\|\sum_{k}c_kT(e_{n_k})\|_{H^p}\geq C\|(c_k)\|_{\ell^2}\,,$$
which implies that the weighted composition operator $W_{\phi,\varphi}$ on $S^p_0$ is bounded below:
$$\|\sum_{k}c_kW_{\phi,\varphi}(g_{n_k})\|_{S^p}\geq C\|(c_k)\|_{\ell^2}\,,$$
where $g_{n_k}:=T_z(e_{n_k})$ is the unit vector in $S^p_0$\,.

Since Paley's theorem (see \cite{DUREN}) implies that the closed linear span $M:=\{e_{n_k}: k\geq1\}$ in $H^p$ is isomorphic to $\ell^2$, which implies that the closed linear span $T_z(M)=\{g_{n_k}: k\geq1\}$ in $S^p_0$ is isomorphic to $\ell^2$\,. Hence, $W_{\phi,\varphi}$ fixes an isomorphic copy of $\ell^2$ in $S^p_0$\,. 

Accordingly, it follows that $W_{\phi,\varphi}$ fixes an isomorphic copy of $\ell^2$ in $S^p$ since $S^p_0\subset S^p$ and $\|W_{\phi,\varphi}\|_{S^p}\geq\|W_{\phi,\varphi}\|_{S^p_0}$, which is the desired result.
\end{proof}

\section{The strict singularity of $W_{\phi,\varphi}$ on $S^\infty$}
Here we show that the claims of theorem~\ref{th1} is still true for the case of $p=\infty$\,. We have known that the weighted composition operator $W_{\phi,\varphi}$ on $S^{\infty}_0$ is unitary similar to the operator
$$T:=W_{\phi',\varphi}\circ T_z+W_{\phi\varphi',\varphi} \text{ on } H^\infty\,.$$
It follows from \cite{CHD} that any weakly compact weighted composition operator on $H^\infty$ is compact. Since by Lemma~\ref{le2} the operator $W_{\phi\varphi',\varphi}$ is compact on $H^\infty$, it holds that $T$ is weakly compact on $H^\infty$ is and only if $T$ is compact on $H^\infty$\,. Moreover, Bourgain \cite{JB} established that a bounded linear operator on $H^\infty$ is weakly compact if and only if it does not fix any copy of $\ell^\infty$\,. Thus, $T$ is compact on $H^\infty$ if and only if it does not fix any copy of $\ell^\infty$. Therefore, the weighted composition operator $W_{\phi,\varphi}$ on $S^{\infty}_0$ is compact if and only if it does not fix any copy of $\ell^\infty$. 

By Lemma~\ref{le2}, \cite[Theorem~2.1]{CH} and the expression of the operator $T$, we see that $W_{\phi,\varphi}$ is bounded on $S^{\infty}$ if and only if $\phi\in S^{\infty}$ and $W_{\phi,\varphi}$ is bounded on $S^{\infty}_0$\,. Moreover, under the assumption for the boundedness of $W_{\phi,\varphi}$ on $S^{\infty}$, $W_{\phi,\varphi}$ is bounded on $S^{\infty}$ if and only if $W_{\phi,\varphi}$ is bounded on $S^{\infty}_0$\,.

Therefore, the weighted composition operator $W_{\phi,\varphi}$ on $S^{\infty}$ is compact if and only if it does not fix any copy of $\ell^\infty$.
In particular, the noncompact operator $W_{\phi,\varphi}$ on $S^{\infty}$ is not strictly singular, that is, strict singularity of bounded operator $W_{\phi,\varphi}$ on $S^{\infty}$ coincides with its compactness.

\end{document}